\newtheorem{theorem}{Theorem}[section]
\newtheorem{proposition}[theorem]{Proposition}
\newtheorem{lemma}[theorem]{Lemma}
\newcommand{\bel}{\begin{equation} \label}
\newcommand{\ee}{\end{equation}}
\newcommand{\supp}{{\mathrm{supp}}}
\newcommand{\C}{{\mathbb C}}
\newcommand{\R}{{\mathbb R}}
\newcommand{\N}{{\mathbb N}}
\newcommand{\Dom}{{\rm D}}
\newcommand{\cC}{{\mathcal C}}
\newcommand{\cF}{{\mathcal F}}
\newcommand{\cH}{{\mathcal H}}
\newcommand{\cS}{{\mathcal S}}
\newcommand{\cU}{{\mathcal U}}
\newcommand{\e}{\mathrm{e}}
\def\beq{\begin{equation}}
\def\eeq{\end{equation}}
\newcommand{\bea}{\begin{eqnarray}}
\newcommand{\eea}{\end{eqnarray}}
\newcommand{\beas}{\begin{eqnarray*}}
\newcommand{\eeas}{\end{eqnarray*}}
\newcommand{\Pre}[1]{\ensuremath{\mathrm{Re} \left( #1 \right)}}
\newcommand{\Schr}{Schr\"odinger }
\newcommand{\labelnummer}{\mbox{\normalfont (\roman{numcount})}}%
\let\curlabelspeicher\@currentlabel%
    \let\saveitem\item%
    \def\item{\saveitem%
      \def\@currentlabel{{\upshape\curlabelspeicher}$\,$\labelnummer}}%
    \let\savelabel\label%
    \def\label##1{\savelabel{##1}%
      \@bsphack%
        \ifmmode\else%
          \protected@write\@auxout{}%
          {\string\newlabel{##1item}{{\labelnummer}{\thepage}}}%
        \fi%
      \@esphack%
    }%
\renewcommand{\appendix}{\def\thesection{\textsc{Appendix}}}
\begin{document}

\title[Time-fractional quantum diffusion]{Asymptotic analysis of time-fractional quantum diffusion}

\author[P.\ D.\ Hislop]{Peter D.\ Hislop}
\address{Department of Mathematics,
    University of Kentucky,
    Lexington, Kentucky  40506-0027, USA}
\email{peter.hislop@uky.edu}

\author[\'E.\ Soccorsi]{\'Eric Soccorsi}
\address{Aix Marseille Univ, Universit\'e de Toulon, CNRS, CPT, 
Marseille, France}
\email{eric.soccorsi@univ-amu.fr}

%\thanks{PDH is partially supported by Simons Foundation Collaboration Grant for Mathematicians No.\ 843327.

\begin{abstract}
We study the large-time asymptotics of the mean-square displacement for the 
time-fractional \Schr equation in $\R^d$. We define the time-fractional derivative by the Caputo derivative and we consider the initial-value problem for the free evolution of wave packets in $\R^d$ governed by the time-fractional \Schr equation $ i^\beta \partial_t^\alpha u = - \Delta u, ~~~~u(t=0) = u_0$, parameterized by two indices $\alpha, \beta \in (0,1]$.
We show distinctly different long-time evolution of the mean square displacement according to the relation between $\alpha$ and $\beta$. In particular, asymptotically ballistic motion occurs only for $\alpha=\beta$. 
\end{abstract}

\maketitle \thispagestyle{empty}

\tableofcontents

%\today

%%%%%%%%%%%%%%%%%%%%%%%%%%%%%%%%%%%%%%%%%%%%%%%%%%%%%%%%%%%%%%%%%%%%%%%%%%%%%%%%%%%%%%%%%%%%%%%%%%%%%%%%%%%

\section{Introduction and statement of the problem}\label{sec:introduction}
\setcounter{equation}{0}

The \Schr equation with fractional spatial derivatives has been the object of many studies. It is proposed as a model for anomalous quantum transport. 
In this article, and a companion article \cite{hs1}, we study the effect of replacing the time derivative in the \Schr equation by a fractional time derivative. We study the behavior of the mean-square displacement (MSD) of a wave packet under the time-fractional \Schr equation (TFSE):
\beq\label{eq:tfse1}
i^\beta \partial_t^\alpha u = -\Delta u, ~~~~u(t=0) = u_0,
\eeq
parameterized by two indices $\alpha, \beta \in (0,1]$. We show that the MSD $D_2(u_0,t)$ (see \eqref{eq:msd1}) exhibits different asymptotic behavior depending on these parameters. These models generalize the time-fractional \Schr equations (TFSE) introduced by Naber \cite{naber}, with $\alpha = \beta \in (0,1)$, and by Narahari Achar, Yale, and Hanneken \cite{ayh}, with $\beta = 1$ and $\alpha \in (0,1)$. For our two-parameter generalization of these models, we compute the asymptotics in time of the MSD.  

In particular, we find that the MSD exhibits distinctly different behavior in each of three parameter regimes. Asymptotic ballistic evolution for which $D_2(u_0,t) \sim t^2$ occurs only when $\alpha = \beta$.  A summary of the main results is as follows. Let us consider $0 < \beta \leq 1$ fixed, and take $0 < \alpha \leq 1$. For the first two regimes, we have 
\bea\label{eq:phases1}
0 < \alpha < \beta \leq 1: & D_2(u_0,t) = C_\alpha (u_0) t^{-2 \alpha} + \mathcal{O}(t^{-3 \alpha}) , \label{eq:ab1}  \\
  & & \nonumber \\
  0 < \alpha = \beta \leq 1: &   D_2(u_0,t) = C_\alpha (u_0) t^{2} + \mathcal{O}(t),  \label{eq:ab2} 
  \eea
  where $C_\alpha (u_0) > 0$ is a finite, positive constant whose value may change line-to-line. 
In particular, result \eqref{eq:ab1} shows how the MSD varies for the TFSE with $\beta = 1$ as $\alpha$ varies in $(0,1)$. For this regime, the MSD tends to zero as $t \rightarrow \infty$. On the other hand, \eqref{eq:ab2} shows ballistic behavior of the MSD when the parameters are equal $\alpha = \beta$. The third regime is characterized by exponential growth of the MSD: 
 \bea\label{eq:ab3}   
%  & & \nonumber \\
 0 < \beta < \alpha  \leq 1:  &  &  \\
  e^{2 t \cos (\frac{\pi \beta}{\alpha} ) 
 \Lambda_-^{\frac{2}{\alpha}} } \left( c_\alpha(u_0) + \mathcal{O}(t^{-1}) \right) 
 & \leq ~~ D_2(u_0,t)   & \leq e^{2 t \cos (\frac{\pi \beta}{\alpha} ) 
 \Lambda_+^{\frac{2}{\alpha}} } \left( C_\alpha(u_0) + \mathcal{O}(t^{-1}) \right) ,
  \nonumber 
  %& & \nonumber \\
\eea
where the finite, positive, constants $\Lambda_\pm$ are determined by the initial condition $u_0$, see Proposition \ref{pr1} and \eqref{eq:supp1}. We remark that Naber \cite{naber} stated that he considered the model with $\alpha = \beta$ because the solutions to the TFSE with these parameters behave similarly to the solutions of the \Schr equation. Our result that the MSD is asymptotically ballistic for $\alpha = \beta$ supports this statement.

In our companion article, \cite{hs1}, we studied the effect of replacing $i \partial_t$, in the \Schr equation, by $i^\beta \partial_t^\alpha$, on the time evolution of the edge current of a half-plane quantum Hall model. We proved the existence of a similar transition in the long-time asymptotic behavior of the edge current depending on the relation between $\alpha$ and $\beta$.  

We mention several works on the \Schr equation with time fractional derivatives, such as \cite{bayin, DX, GPP1, GPT1, iomin2009, iomin2011, iomin2019_A, iomin2019_B, laskin2000_1, laskin2000_2, laskin2002, WX}. We refer to \cite[section 1]{hs1} for a description of their contributions and the relation to our work.  We have found the books by Kilbas, Srivastava, and Trujillo \cite{kst}, and by Podlubny \cite{Pod}, to be useful references.

%%%%%%%%%%%%%%%%%%%%%%%%%%%%%%%%%%%%%%%%
%\begin{center}
%\large \textbf{Asymptotic analysis of time-fractional quantum diffusion}
%\end{center}
%
%\bigskip
%\bigskip
%\bigskip
%\bigskip
% %%%%%%%%%%%%%%%%%%%%%%%%%%%%%%%%%%%%%%%%%%
% 

%Let $H$ be the Laplace operator in $\R^d$, $d \ge 1$, with domain $\Dom(H):=H^2(\R^d)$, which is self-adjoint in $L^2(\R^d)$.
%Put $\R_+:=(0,\infty)$. Given $\alpha \in (0,1)$ and $\beta \in (0,1]$, we consider the generalized time-fractional Schr\"odinger equation
%\bel{eq1} 
%-i^\beta \partial_t^\alpha u(x,t) + H u(x,t) = 0,\ (x,t) \in Q:=\R^d \times \R_+,
%\ee
%with initial state $u_0$,
%\bel{eq2}
%u(x,0) = u_0,\ x \in \R^d,
%\ee
%where $\partial_t^\alpha$ denotes the Caputo fractional derivative of order $\alpha$,
%$$ \partial_t^\alpha u(t) :=\frac{1}{\Gamma(1-\alpha)} \int_0^t \frac{u'(s)}{(t-s)^{\alpha}} \dd s,\ u \in W_{\rm loc}^{1,1}(\R_+). $$

%%%%%%%%%%%%%%%%%%%%%%%%%%%%%%%
\subsection{Acknowledgement} The authors thank Yavar Kian for discussions on the topics of this paper.  PDH thanks Aix Marseille Universit\'e for some financial support and hospitality during the time parts of this paper were written. PDH is partially supported by Simons Foundation Collaboration Grant for Mathematicians No.\ 843327. ÉS is partially supported by the Agence Nationale de la Recherche (ANR) under grant ANR-17-CE40-0029.
%\end{acknowledgement}

%%%%%%%%%%%%%%%%%%%%%%%%%%%%%%%%%%%%%%%
\section{Well-posedness of the TFSE}\label{sec-wp}
\setcounter{equation}{0}

Let $H_0 := - \Delta$ be the Laplace operator in $\R^d$, $d \ge 1$, with domain $\Dom(H_0):=H^2(\R^d)$, where $H^j (\R^d)$ denotes the Sobolev space of order $j \in \N$. The operator $H_0$ is self-adjoint on this domain in $L^2(\R^d)$. We set $\R_+:=(0,\infty)$. Given $\alpha \in (0,1]$, and $\beta \in (0,1]$, we consider the generalized time-fractional Schr\"odinger equation (TFSE)
\bel{eq1} 
-i^\beta \partial_t^\alpha u(x,t) + H_0 u(x,t) = 0,\ (x,t) \in Q:=\R^d \times \R_+,
\ee
with initial state $u_0$, in an appropriate subspace of $L^2(\R^d)$, see \eqref{eq:initial_cond1}, so that
\bel{eq2}
u(x,0) = u_0(x), ~~\ x \in \R^d. 
\ee
The fractional time derivative $\partial_t^\alpha$, for $\alpha \in (0,1)$, is the Caputo fractional derivative of order $\alpha$ defined by
\beq\label{eq:cap1}
\partial_t^\alpha u(t) :=\frac{1}{\Gamma(1-\alpha)} \int_0^t \frac{u'(s)}{(t-s)^{\alpha}} \dd s,\ u \in W_{\rm loc}^{1,1}(\R_+). 
\eeq
We note that $\partial_t^\alpha u \rightarrow \partial_t u$, as $\alpha \rightarrow 1$,  if, for example, $u \in C^2(\R)$.  This follows from the formula \eqref{eq:cap1} by an integration by parts. 

We define a solution to the system \eqref{eq1}-\eqref{eq2} as any function
$$ u \in L_{\rm loc}^1(\R_+,\Dom(H_0)) \cap W_{\rm loc}^{1,1}(\R_+,L^2(\R^d)), $$
satisfying the two following conditions simultaneously:
\begin{enumerate}%[(i)]
\item  $-i^\beta \partial_t^\alpha u(x,t) + H_0 u(x,t) = 0$ for a.e. $(x,t) \in Q$,
\item $\lim_{t \downarrow 0} \norm{u(\cdot,t)-u_0}=0$,
\end{enumerate}
where $\norm{\cdot}$ denotes the usual norm in $L^2(\R^d)$.

%%%%%%%%%%%%%%%%%%%%%%%%%%%%%%%%%%%%%%
\subsection{Existence and uniqueness result}

Prior to stating the existence and uniqueness result for the solution to \eqref{eq1}-\eqref{eq2}, we introduce some notation. First, we define the Mittag-Leffler function as
\beq\label{eq:ml1}
 E_{\alpha,\gamma}(z):=  \sum_{n=0}^\infty \frac{z^n}{\Gamma(\alpha n + \gamma)},\ \alpha \in (0,1),\ \gamma \in \R,\ z \in \C, 
 \eeq
where $\Gamma$ is the usual Gamma function. We refer to \cite{kst} and \cite{Pod} for  comprehensive discussions of these functions. 
 
Next, we write $\cF$ for the Fourier transform in $\R^d$, i.e.,
$$ 
\cF u(\xi) = \hat{u}(\xi):= (2  \pi)^{-d \slash 2} \int_{\R} e^{-i x \cdot \xi} u(x) \dd x,\ u \in L^2(\R^d),\ \xi \in \R^d, 
$$
where $\cdot$ stands for the Euclidean scalar product in $\R^d$. We recall that the operator $\cF$ is unitary in $L^2(\R^d)$ and we denote its adjoint by $\cF^*$.  We denote by $\cC_0(\R^d)$ the space of compactly supported continuous functions in $\R^d$. Let us introduce the set
\beq\label{eq:initial_cond1} 
\cU_{\alpha,\beta} := \left\{ \begin{array}{cl} \Dom(H_0) & \mbox{if}\ \beta>\alpha \\ \Dom(H_0^{\frac{1}{\alpha}}) & \mbox{if}\ \beta = \alpha \\ \cF^* \cC_0(\R^d) & \mbox{if}\ \beta<\alpha, \end{array} \right. .
\eeq
 Since $\cF^* \cC_0(\R^d) \subset \Dom(H_0^{\frac{1}{\alpha}})$, we notice that $\cU_{\alpha,\beta} \subset \Dom(H_0^{\frac{1}{\alpha}})$ when $\beta \le \alpha$.
We denote by $\norm{\cdot}$  the usual norm in $L^2(\R^d)$, set
$\norm{v}_{\Dom(H_0^{\gamma})}:=\left( \norm{v}^2+ \norm{H_0^{\gamma} v}^2 \right)^{1 \slash 2}$ for all $v \in \Dom(H_0^{\gamma})$, $\gamma \ge 1$, and we equip $\cU_{\alpha,\beta}$ with the norm:
$$ \norm{u}_{\cU_{\alpha,\beta}} := \left\{ \begin{array}{cl} \norm{u}_{\Dom(H_0)} & \mbox{if}\ \beta > \alpha \\ \norm{u}_{\Dom(H_0^{\frac{1}{\alpha}})} & \mbox{if}\ \beta \le \alpha. \end{array} \right.$$

Then, the existence and uniqueness result for the system 
\eqref{eq1}-\eqref{eq2} can be stated as follows.

\begin{proposition}
\label{pr1}
Pick $u_0 \in \cU_{\alpha,\beta}$.
Then, for all $T \in \R_+$, the system \eqref{eq1}-\eqref{eq2} admits a unique solution 
$ u \in  \cC([0,T],\Dom(H_0)) \cap W_{\rm loc}^{1,1}(0,T;L^2(\R^d)), $
which is expressed by
$$ u(x,t) = \cF^* \left( E_{\alpha,1}((-i)^\beta \abs{\cdot}^2 t^\alpha) \hat{u}_0 \right)(x),\ (x,t) \in Q, $$
and there exists a unique positive constant $C$, depending only on $\alpha$, $\beta$ and $d$, such that we have for $\beta \ge \alpha$,
\bel{ee1} 
\norm{u}_{\cC([0,T],D(H_0))} + \norm{u}_{W^{1,1}(0,T;L^2(\R^d))} \le C (1+T) \norm{u_0}_{\cU_{\alpha,\beta}},
\ee
whereas for $\beta < \alpha$,
\bel{ee2}
\norm{u}_{\cC([0,T],\Dom(H_0))} + \norm{u}_{W^{1,1}(0,T;L^2(\R^d))} \le C (1+T) \e^{\cos \left( \frac{\pi \beta}{2\alpha} \right)\Lambda^{\frac{2}{\alpha}} T}
\norm{u_0}_{\cU_{\alpha,\beta}},
\ee
where 
$\Lambda:=\max \{ \abs{\xi},\ \xi \in \supp (\hat{u}_0) \}$.
\end{proposition}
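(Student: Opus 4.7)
My plan is to move the problem to the Fourier side, where the Caputo time derivative commutes with $\cF$ and $H_0$ becomes multiplication by $|\xi|^2$, reducing \eqref{eq1}--\eqref{eq2} to the parameterized family of scalar Caputo ODEs
$$ \partial_t^\alpha \hat{u}(\xi,t) = (-i)^\beta |\xi|^2 \hat{u}(\xi,t), \qquad \hat{u}(\xi,0) = \hat{u}_0(\xi), $$
after using $-i^\beta \cdot i^{-\beta} \cdot (-1)^{-1} = (-i)^\beta$. The standard solution of $\partial_t^\alpha v = \lambda v$, $v(0)=v_0$, is $v(t) = E_{\alpha,1}(\lambda t^\alpha) v_0$, obtained by plugging the Mittag-Leffler series into the definition \eqref{eq:cap1} and using $\frac{1}{\Gamma(1-\alpha)}\int_0^t (t-s)^{-\alpha} s^{\alpha n - 1}\dd s = \Gamma(\alpha n)/\Gamma(\alpha(n-1)+1) \cdot t^{\alpha(n-1)}$. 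This yields the announced representation formula, and uniqueness follows pointwise in $\xi$ from the uniqueness theory for linear Caputo ODEs (e.g.\ \cite{kst}), giving uniqueness of $u$ by Plancherel.

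The verification that the formula defines a solution in $\cC([0,T],\Dom(H_0)) \cap W^{1,1}(0,T;L^2(\R^d))$ with the stated bounds rests entirely on the large-$|z|$ asymptotics of $E_{\alpha,\gamma}(z)$ for $\gamma = 1$ and $\gamma = \alpha$. For $z = (-i)^\beta |\xi|^2 t^\alpha$, one has $\arg z = -\pi \beta / 2$, and the classical Podlubny-type expansion distinguishes three regimes according to whether $|\arg z|$ exceeds, equals, or falls below $\alpha \pi /2$, precisely the trichotomy $\beta>\alpha$, $\beta=\alpha$, $\beta<\alpha$ in the statement. When $\beta > \alpha$, $|E_{\alpha,1}(z)| \le C$ and the algebraic tail $|z|^{-1}$ dominates, so $|\xi|^2 |E_{\alpha,1}((-i)^\beta|\xi|^2 t^\alpha)| \le C |\xi|^2$ pointwise, whence $u_0 \in \Dom(H_0)$ yields $\cC([0,T],\Dom(H_0))$ with a $T$-independent prefactor. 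When $\beta < \alpha$, the exponential term produces $|E_{\alpha,1}(z)| \lesssim \e^{\Re z^{1/\alpha}} = \e^{|\xi|^{2/\alpha} t \cos(\pi\beta/(2\alpha))}$, and the compact Fourier support of $u_0 \in \cF^* \cC_0(\R^d)$ replaces $|\xi|$ by $\Lambda$, producing the factor $\e^{\cos(\pi\beta/(2\alpha))\Lambda^{2/\alpha} T}$ of \eqref{ee2} (the extra factor $2$ in the exponent in \eqref{eq:ab3} will arise later when $D_2$ squares this modulus).

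For the time-derivative estimate I would use the identity
$$ \frac{\dd}{\dd t} E_{\alpha,1}(\lambda t^\alpha) = \lambda t^{\alpha-1} E_{\alpha,\alpha}(\lambda t^\alpha), $$
derived termwise from the series \eqref{eq:ml1}, so that $\partial_t \hat{u}(\xi,t) = (-i)^\beta |\xi|^2 t^{\alpha-1} E_{\alpha,\alpha}((-i)^\beta |\xi|^2 t^\alpha) \hat{u}_0(\xi)$. The integrable singularity $t^{\alpha-1}$ handles $t\downarrow 0$ in the $W^{1,1}(0,T;L^2)$ norm. The main obstacle is the borderline case $\beta = \alpha$: on $|\arg z| = \alpha\pi/2$ the asymptotic $E_{\alpha,\alpha}(z) \sim \alpha^{-1} z^{(1-\alpha)/\alpha} \e^{z^{1/\alpha}}$ has oscillatory modulus $|z|^{1/\alpha - 1}$, so
$$ |\xi|^2 \bigl|E_{\alpha,\alpha}((-i)^\alpha|\xi|^2 t^\alpha)\bigr| \lesssim |\xi|^{2/\alpha}\, t^{1-\alpha}, $$
and the extra $|\xi|^{2/\alpha-2}$ must be absorbed by requiring $u_0 \in \Dom(H_0^{1/\alpha})$; integrating $t^{\alpha-1} \cdot t^{1-\alpha} = 1$ over $(0,T)$ then produces the linear growth $(1+T)$ in \eqref{ee1}. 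Throughout, one must split the $|\xi|$-integration into $\{|\xi|\le 1\}$ and $\{|\xi|\ge 1\}$ and pass from pure large-$|z|$ asymptotics to uniform bounds using the continuity of $E_{\alpha,\gamma}$ on compact sets; making this patching quantitative, especially at $\beta = \alpha$, is the most delicate step.
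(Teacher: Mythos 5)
Your proposal follows essentially the same route as the paper: Fourier transform to the scalar Caputo ODE, the Mittag--Leffler representation $\hat{u}(\xi,t)=E_{\alpha,1}((-i)^\beta|\xi|^2t^\alpha)\hat{u}_0(\xi)$ with uniqueness from the Caputo ODE theory, the derivative identity $\partial_t E_{\alpha,1}(\lambda t^\alpha)=\lambda t^{\alpha-1}E_{\alpha,\alpha}(\lambda t^\alpha)$, and the three-sector Podlubny bounds keyed to $\abs{\arg\kappa}=\pi\beta/2$ versus $\pi\alpha/2$, including the role of $\Dom(H_0^{1/\alpha})$ at $\beta=\alpha$ and of the compact support radius $\Lambda$ for $\beta<\alpha$. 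The only cosmetic difference is that the paper packages the uniform-in-$\abs{\kappa}$ bounds in a lemma (its Lemma \ref{lm1}, from Podlubny's Theorems 1.5--1.6) and handles the low-frequency absorption via an interpolation embedding (Lemma \ref{lm2}), whereas you propose the equivalent patching by splitting $\{\abs{\xi}\le 1\}$, $\{\abs{\xi}\ge 1\}$ and compactness.
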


%%%%%%%%%%%%%%%%%%%%%%%%%%%%%%%%%%%%%%%
\subsection{Proof of Proposition \ref{pr1}}\label{subsec:pf_prop1}

We start by establishing the two following technical results.
\begin{lemma}
\label{lm1}
Let $t \in \R_+$, let $\xi \in \R^d \setminus \{ 0 \}$ and put
$\kappa:=(-i)^\beta \abs{\xi}^2 t^{\alpha}$. Then, for all $\gamma \in \R$, there exists a constant $C>0$, depending only on $\alpha$ and $\gamma$ such that we have
\bel{e1}
\abs{E_{\alpha,\gamma}(\kappa)} \le \frac{C}{1+\abs{\kappa}},\ \alpha < \beta,
\ee
and
\bel{e2}
{E_{\alpha,\gamma}(\kappa)} \le C\left( (1+\abs{\kappa})^{\frac{1-\gamma}{\alpha}} \e^{\Pre{\kappa^{\frac{1}{\alpha}}}} +\frac{1}{1+\abs{\kappa}} \right),\ \alpha \ge \beta.
\ee
\end{lemma}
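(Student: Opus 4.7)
The plan is to reduce \eqref{e1} and \eqref{e2} to the classical asymptotic expansions of the Mittag-Leffler function for large $\abs{z}$, see, e.g., \cite{Pod, kst}, and then patch these with the local boundedness of $E_{\alpha,\gamma}$ on compact sets, which follows from the fact that $E_{\alpha,\gamma}$ is entire.

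First I would locate $\kappa$ in the complex plane. With the principal branch, $(-i)^\beta = \mathrm{e}^{-i \beta \pi / 2}$, so $\kappa = \abs{\xi}^2 t^\alpha \mathrm{e}^{-i \beta \pi / 2}$ and $\abs{\arg \kappa} = \beta \pi / 2$. The classical sectoral dichotomy for $E_{\alpha,\gamma}(z)$ is governed by a threshold $\mu$ in the admissible range $\alpha \pi / 2 < \mu < \min(\pi, \alpha \pi)$: in the outer sector $\mu \le \abs{\arg z} \le \pi$ one has, for every integer $N \ge 1$,
\begin{equation*}
E_{\alpha,\gamma}(z) = - \sum_{k=1}^{N} \frac{z^{-k}}{\Gamma(\gamma - \alpha k)} + O\!\left(\abs{z}^{-N-1}\right), \quad \abs{z} \to \infty,
\end{equation*}
whereas in the complementary inner sector $\abs{\arg z} < \mu$ one must add the exponential main term $\alpha^{-1} z^{(1-\gamma)/\alpha} \exp(z^{1/\alpha})$.

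If $\alpha < \beta$, then $\alpha \pi / 2 < \beta \pi / 2$, and I would pick $\mu \in (\alpha \pi / 2, \min(\alpha \pi, \beta \pi / 2)]$, which places $\kappa$ in the decay sector. The expansion with $N = 1$ gives $\abs{E_{\alpha,\gamma}(\kappa)} \le C \abs{\kappa}^{-1}$ for large $\abs{\kappa}$; gluing with the uniform bound on a compact disk $\{\abs{\kappa} \le R\}$ produces \eqref{e1}. If $\alpha \ge \beta$, then every admissible $\mu$ satisfies $\mu > \alpha \pi / 2 \ge \beta \pi / 2 = \abs{\arg \kappa}$, so $\kappa$ lies in the exponential-growth sector. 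Applying the corresponding expansion with $N = 1$, using $\abs{\exp(\kappa^{1/\alpha})} = \mathrm{e}^{\Pre{\kappa^{1/\alpha}}}$, bounding the prefactor by $\abs{\kappa}^{(1-\gamma)/\alpha} \le C(1+\abs{\kappa})^{(1-\gamma)/\alpha}$ for large $\abs{\kappa}$, and absorbing the polynomial remainder and the value at the origin into the term $C/(1+\abs{\kappa})$, yields \eqref{e2}.

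The main obstacle I anticipate is the careful sectoral bookkeeping at the borderline $\alpha = \beta$, where $\abs{\arg \kappa} = \alpha \pi / 2$ sits exactly at the lower edge of the admissible range for $\mu$: one must exploit the strict inequality $\mu > \alpha \pi / 2$ to guarantee that $\kappa$ still falls into the exponential-growth sector. A secondary point is to verify that the factor $(1 + \abs{\kappa})^{(1-\gamma)/\alpha}$ in \eqref{e2} simultaneously dominates $\abs{\kappa}^{(1-\gamma)/\alpha}$ at infinity and stays bounded near $\kappa = 0$, for both signs of the exponent $1-\gamma$, so that the large-$\abs{\kappa}$ expansion and the local boundedness merge into a single estimate with uniform constants depending only on $\alpha$ and $\gamma$.
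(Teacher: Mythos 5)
Your argument is correct and follows essentially the same route as the paper: locate $\abs{\arg\kappa}=\beta\pi\slash 2$ relative to a sector parameter $\mu\in(\pi\alpha\slash 2,\min(\pi\alpha,\pi\beta\slash 2))$ (resp.\ $\mu\in(\pi\alpha\slash 2,\pi\alpha)$) and invoke the sectoral Mittag-Leffler estimates from Podlubny. The only difference is that the paper cites the uniform bounds of \cite[Theorems 1.5 and 1.6, Eqs.\ (1.147)--(1.148)]{Pod} directly, so your extra step of patching the large-$\abs{\kappa}$ asymptotics with boundedness on compact sets (and the care at the closed endpoint of your $\mu$-interval) is not needed, though it is sound.
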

\begin{proof}
When $\alpha<\beta$, we pick $\mu \in (\pi \alpha \slash 2, \min(\pi \alpha, \pi \beta \slash 2))$ and 
apply \cite[Theorem 1.6]{Pod}. Since $\abs{\arg(\kappa)} \in [\mu,\pi]$, we get \eqref{e1} directly from \cite[Eq. (1.148)]{Pod}.
Similarly, for $\alpha \ge \beta$, we apply \cite[Theorem 1.5]{Pod} with $\mu \in (\pi \alpha \slash 2, \pi \alpha)$. As $\abs{\arg(\kappa)} \le \mu$, we see that \cite[Eq. (1.147)]{Pod} yields \eqref{e2}.
\end{proof}

\begin{lemma}
\label{lm2}
We have $\Dom(H_0^{\frac{1}{\alpha}}) \subset \Dom(H_0)$ and the embedding is continuous. More precisely, there exists a constant, $C_\alpha>0$, depending only on $\alpha$, such that
$$ \forall u \in \Dom(H_0^{\frac{1}{\alpha}}),\ \norm{u}_{\Dom(H_0)} \le C_\alpha  \norm{u}_{\Dom(H_0^{\frac{1}{\alpha}})}. $$
\end{lemma}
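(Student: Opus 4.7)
The strategy is to pass to Fourier variables, where $H_0$ becomes multiplication by $|\xi|^2$ and $H_0^{1/\alpha}$ becomes multiplication by $|\xi|^{2/\alpha}$, and then reduce the claim to a pointwise comparison on symbols.

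First I would invoke the spectral theorem (equivalently, the Fourier representation of the free Laplacian) to rewrite the two norms as
\[
\|u\|_{\Dom(H_0)}^2 = \int_{\R^d} (1+|\xi|^4)\,|\hat u(\xi)|^2\,\dd \xi,\qquad \|u\|_{\Dom(H_0^{1/\alpha})}^2 = \int_{\R^d} (1+|\xi|^{4/\alpha})\,|\hat u(\xi)|^2\,\dd \xi .
\]
The continuous inclusion then reduces to proving a pointwise inequality of the form $1+r^4 \le C_\alpha^2\,(1+r^{4/\alpha})$ for all $r \ge 0$, with $C_\alpha$ depending only on $\alpha$.

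Since $\alpha \in (0,1]$, we have $4/\alpha \ge 4$, so I would split the range of $r$ into the two obvious regions. For $r \le 1$, $r^4 \le 1$, hence $1+r^4 \le 2 \le 2(1+r^{4/\alpha})$. For $r \ge 1$, $r^4 \le r^{4/\alpha}$, hence $1+r^4 \le 1+r^{4/\alpha} \le 2(1+r^{4/\alpha})$. Combining the two cases yields the desired pointwise bound with constant $C_\alpha = \sqrt{2}$ (which in fact is $\alpha$-independent, a better bound than asserted).

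Integrating this pointwise inequality against $|\hat u(\xi)|^2\,\dd\xi$ gives $\|u\|_{\Dom(H_0)}^2 \le 2\,\|u\|_{\Dom(H_0^{1/\alpha})}^2$, which proves both the set inclusion $\Dom(H_0^{1/\alpha}) \subset \Dom(H_0)$ and the continuity of the embedding. There is no real obstacle here: the only subtlety is making sure the case $\alpha = 1$ is allowed (where the inequality is trivial), and noting that, because $1/\alpha \ge 1$, the functional calculus gives $\Dom(H_0^{1/\alpha}) \subset \Dom(H_0)$ abstractly as well, so the Fourier computation is merely a quantitative refinement.
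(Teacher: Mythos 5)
Your proof is correct, and it takes a genuinely different (and more elementary) route than the paper. The paper does not split the frequency range: it writes $\abs{\xi}^2\abs{\hat v(\xi)} = \abs{\xi}^2\abs{\hat v(\xi)}^{\alpha}\abs{\hat v(\xi)}^{1-\alpha}$, applies H\"older's inequality with the conjugate exponents $\frac{2}{\alpha}$ and $\frac{2}{1-\alpha}$ to get the interpolation bound $\norm{H_0 v} \le \norm{H_0^{1/\alpha} v}^{\alpha}\norm{v}^{1-\alpha}$, and then Young's inequality to obtain $\norm{H_0 v} \le \alpha \norm{H_0^{1/\alpha} v} + (1-\alpha)\norm{v}$. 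Your argument instead reduces everything to the pointwise symbol inequality $1+r^4 \le 2\,(1+r^{4/\alpha})$, proved by the case split $r\le 1$, $r\ge 1$, which integrates immediately against $\abs{\hat u(\xi)}^2\,\dd\xi$; this yields the explicit, $\alpha$-independent constant $\sqrt{2}$ and avoids H\"older altogether. The paper's route buys a sharper structural statement (a multiplicative interpolation inequality between $\norm{v}$, $\norm{H_0 v}$, $\norm{H_0^{1/\alpha} v}$, of independent interest), while yours is shorter, handles $\alpha=1$ without comment, and, as you note, is really just the spectral-theorem fact that $\Dom(A^s)\subset\Dom(A^t)$ for a nonnegative self-adjoint operator when $s\ge t\ge 0$ — the same two-regime split $\lambda\le 1$, $\lambda>1$ in the spectral measure. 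Either argument fully establishes the lemma as used in the proof of Proposition \ref{pr1}.
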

\begin{proof}
Let $v \in \Dom(H_0^{\frac{1}{\alpha}})$. We have $\hat{v}$ in $L^2(\R^d)$ and $\xi \mapsto \abs{\xi}^{\frac{2}{\alpha}} \hat{v}(\xi) \in L^2(\R^d)$. Thus, $\xi \mapsto  \abs{\xi}^2 \abs{\hat{v}(\xi)}^{\alpha} \in L^{\frac{2}{\alpha}}(\R^d)$ and 
$\abs{\hat{v}}^{1-\alpha} \in L^{\frac{2}{1-\alpha}}(\R^d)$. Since 
$$ \frac{1}{\frac{2}{\alpha}} + \frac{1}{\frac{2}{1-\alpha}} = \frac{\alpha}{2}+\frac{1-\alpha}{2}=\frac{1}{2} $$
and 
$$ \abs{\xi}^2 \abs{\hat{v}(\xi)} =  \abs{\xi}^2 \abs{\hat{v}(\xi)}^{\alpha} \abs{v(\xi)}^{1-\alpha},\ \xi \in \R^d, $$
we have $\xi \mapsto \abs{\xi}^2 \hat{v}(\xi) \in L^2(\R^d)$ and
\beq\label{eq:bd1}
\norm{\abs{\xi}^2 \hat{v}(\xi)} \le \norm{\abs{\xi}^2 \abs{\hat{v}(\xi)}^{\alpha}}_{L^{\frac{2}{\alpha}}(\R^d)} 
\norm{\abs{\hat{v}(\xi)}^{1-\alpha}}_{L^{\frac{2}{1-\alpha}}(\R^d)},
\eeq
by H\"older's inequality. Defining $\cF H_0 \cF^{-1}=\hat{H_0}$, the multiplication operator by $| \xi |^2$,  inequality \eqref{eq:bd1}  can be equivalently rewritten as
$$ \norm{\hat{H_0} \hat{v}} \le  \norm{\hat{H_0}^{\frac{1}{\alpha}} \hat{v}}^{\alpha} \norm{\hat{v}}^{1-\alpha}. $$
Applying Young's inequality, we obtain 
$$ \norm{\hat{H_0} \hat{v}} \le  \alpha \norm{\hat{H_0}^{\frac{1}{\alpha}} \hat{v}} + (1-\alpha) \norm{\hat{v}}, $$
which proves the desired result.
\end{proof}

Armed with these two lemmas, we are now in position to prove Proposition \ref{pr1}.

\medskip

\begin{proof}
Let $u$ be a solution to \eqref{eq1}-\eqref{eq2} in the sense of Section \ref{sec-wp}. Then, bearing in mind that 
$\cF H_0 \cF^{-1}=\hat{H_0}$ is the multiplication operator in $L^2(\R^d)$, by the function $\abs{\xi}^2=\xi \cdot \xi$, i.e.,
$$ (\hat{H_0} v)(\xi)= \abs{\xi}^2 v(\xi),\ \xi \in \R^d,\ v \in \Dom(\hat{H_0})= \{ v \in L^2(\R^d),\ \abs{\xi}^2 v(\xi) \in L^2(\R^d) \}, $$
we get upon applying the Fourier transform $\cF$ on both sides of the equations \eqref{eq1} and \eqref{eq2}, that 
\bel{p1}
\left\{ \begin{array}{ll} -i^\beta \partial_t^\alpha \hat{u}(\xi,t) + \abs{\xi}^2 \hat{u}(\xi,t) = 0, & (\xi,t) \in Q \\ \hat{u}(\xi,0)=\hat{u}_0, & \xi \in \R^d, \end{array}
\right. 
\ee
where $\hat{u}(\xi,t):=\left( \cF u(\cdot,t) \right)(\xi)$. 

For each $\xi \in \R^d$, the system \eqref{p1} admits a unique solution 
\bel{p2} 
\hat{u}(\xi,t)=E_{\alpha,1}((-i)^{\beta} \abs{\xi}^2 t^\alpha) \hat{u}_0(\xi),\ (\xi,t) \in Q,
\ee
according to \cite[Theorem 4.3]{kst}. 

\noindent {\it First case: $\alpha<\beta$.} We have 
$$\abs{\hat{H_0}^j \hat{u}(\xi,t)} \le C \abs{(\hat{H_0}^{j} \hat{u}_0)(\xi)},\ j=0,1,\ \xi \in \R^d,\ t \in \R_+, $$
from \eqref{e1} and \eqref{p2}, whence
\bel{q1} 
\norm{\hat{u}(\cdot,t)}_{\Dom(\hat{H_0})} \le C \norm{\hat{u}_0}_{\Dom(\hat{H_0})},\ t \in \R_+,
\ee
and
\bel{q2}
\norm{\hat{u}}_{L^1(0,T;L^2(\R^d))} \le C T \norm{\hat{u}_0}_{L^2(\R^d)}.
\ee
Further, since $\frac{\dd}{\dd z} E_{\alpha,1}(z)=\alpha^{-1} E_{\alpha,\alpha}(z)$ for all $z \in \C$, which follows from \eqref{eq:ml1} by direct computation,  we deduce from \eqref{p2} that
\bel{q3}
\partial_t \hat{u}(\xi,t) = (-i)^\beta \abs{\xi}^2 t^{\alpha-1} E_{\alpha,\alpha}((-i)^{\beta} \abs{\xi}^2 t^\alpha) \hat{u}_0(\xi),\ (\xi,t) \in Q.
\ee
It follows from this and \eqref{e1} that
$$
\abs{\partial_t \hat{u}(\xi,t)} \le C \frac{\abs{\xi}^2 t^{\alpha-1}}{1+\abs{\xi}^2 t^{\alpha}} \abs{\hat{u}_0(\xi)},\ (\xi,t) \in Q.
$$
Thus, integrating with respect to $t$ over $(0,T)$, we get that
\beas
\norm{\partial_t \hat{u}(\xi)}_{L^1(0,T)} & \le &C \ln(1+\abs{\xi}^2 T^\alpha) \abs{\hat{u}_0(\xi)} \\
%& \le &  \frac{C}{\alpha} \abs{\xi}^2 T^{\alpha} \abs{\hat{u}_0(\xi)}\\
& \le & C T^{\alpha} \abs{(\hat{H_0} \hat{u}_0)(\xi)},\ \xi \in \R^d,
\eeas
where we used that $\ln(1+s) \le s$ for all $s \ge 0$, and we substituted $C$ for $\alpha^{-1} C$. Therefore, we have
$$ 
\norm{\partial_t \hat{u}}_{L^1(0,T;L^2(\R^d))} \le C T^{\alpha} \norm{\hat{H_0} \hat{u}_0}_{L^2(\R^d)},
$$
and \eqref{ee1}
%$$ \norm{u}_{\cC(\overline{\R_+},\Dom(H))} + \norm{u}_{W^{1,1}(0,T;L^2(\R^d))} \le C (1+T) \norm{u_0}_{\Dom(H)}.
%$$
follows readily from this and \eqref{q1}-\eqref{q2}.\\

\noindent {\it Second case: $\alpha=\beta$.}
This time, it follows from \eqref{e2} and \eqref{p2} that 
\beas
\abs{\hat{H_0}^j \hat{u}(\xi,t)} & \le & C \left( 1 + \frac{1}{1+\abs{\xi}^2 t^\alpha} \right) \abs{(\hat{H_0}^{j} \hat{u}_0)(\xi)} \\
& \le & C \abs{(\hat{H_0}^{j} \hat{u}_0)(\xi)},\ j=0,1,\ (\xi,t) \in Q,
\eeas
where we replaced $2 C$ by $C$ in the last line. Hence the estimates \eqref{q1} and \eqref{q2} are still valid.

Next, with reference to \eqref{q3}, we infer from \eqref{e2} that
\beas
\abs{\partial_t \hat{u}(\xi,t)} & \le &C \abs{\xi}^2 t^{\alpha-1} \left( (1+\abs{\xi}^2 t^\alpha)^{\frac{1-\alpha}{\alpha}} + \frac{1}{1+\abs{\xi}^2 t^\alpha} \right) \abs{\hat{u}_0(\xi)} \\
& \le & C \abs{\xi}^2 t^{\alpha-1} (1+\abs{\xi}^2 t^\alpha)^{\frac{1-\alpha}{\alpha}} \abs{\hat{u}_0(\xi)},\ (\xi,t) \in Q,
\eeas
where we substituted $C$ for $2C$ in the last line. Thus, by integrating with respect to $t$ over $(0,T)$, and then using that $(1+s)^{\frac{1}{\alpha}} \le 2^{\frac{1}{\alpha}} (s^{\frac{1}{\alpha}} + 1)$ for all $s \ge 0$, we obtain that
\beas
\norm{\partial_t \hat{u}(\xi,\cdot)}_{L^1(0,T)} & \le & C \left( (1+\abs{\xi}^2 T^\alpha)^{\frac{1}{\alpha}} - 1 \right) \abs{\hat{u}_0(\xi)} \\
& \le & C \left( 1 + \abs{\xi}^{\frac{2}{\alpha}} T \right) \abs{\hat{u}_0(\xi)},\ 
\xi \in \R^d,
\eeas
where $C$ was substituted for $2^{\frac{1}{\alpha}} C$ in the last line. Therefore, integrating with respect to $\xi$ over $\R^d$, we find that
$$ \norm{\partial_t \hat{u}}_{L^1(0,T;L^2(\R^d))} \le C (1+T) \norm{\hat{u}_0}_{\Dom(\hat{H_0}^{\frac{1}{\alpha}})}.$$
Now, putting this together with \eqref{q1}-\eqref{q2} and Lemma \ref{lm2}, we get \eqref{ee1}.\\
%$$ \norm{u}_{\cC(\overline{\R_+},\Dom(H))} + \norm{u}_{W^{1,1}(0,T;L^2(\R^d))} \le C (1+T) \left( \norm{u_0}_{\Dom(H)} + %\norm{u_0}_{\Dom(H^{\frac{1}{\alpha}})} \right).
%$$

\noindent {\it Third case: $\alpha>\beta$.}
It follows readily from \eqref{e2} and \eqref{p2} that, 
\beas
\abs{\hat{H_0}^j \hat{u}(\xi,t)} & \le & C \e^{\cos \left( \frac{\pi \beta}{2\alpha} \right) \xi^{\frac{2}{\alpha}} t} \abs{(\hat{H_0}^{j} \hat{u}_0)(\xi)} \\
& \le & C  \e^{\cos \left( \frac{\pi \beta}{2\alpha} \right) \Lambda^{\frac{2}{\alpha}} t} \abs{(\hat{H_0}^{j} \hat{u}_0)(\xi)},\ j=0,1,\ (\xi,t) \in \supp(\hat{u}_0) \times \R_+,
\eeas
where we replaced the constant $2C$ by $C$ in the last line.
As a consequence we have
\bel{q6}
\norm{\hat{u}(\cdot,t)}_{\Dom(\hat{H_0})} \le C \e^{\cos \left( \frac{\pi \beta}{2\alpha} \right) \Lambda^{\frac{2}{\alpha}} t} \norm{\hat{u}_0}_{\Dom(\hat{H_0})},\ t \in \R_+.
\ee
Next, with reference to \eqref{q3}, we infer from \eqref{e2} that
\beas
\abs{\partial_t \hat{u}(\xi,t)} & \le &C \abs{\xi}^2 t^{\alpha-1} \left( (1+\abs{\xi}^2 t^\alpha)^{\frac{1-\alpha}{\alpha}} \e^{\cos \left( \frac{\pi \beta}{2\alpha} \right) \abs{\xi}^{\frac{2}{\alpha}} t} + \frac{1}{1+\abs{\xi}^2 t^\alpha} \right) \abs{\hat{u}_0(\xi)} \\
& \le & C \abs{\xi}^2 t^{\alpha-1} (1+ \abs{\xi}^2 t^\alpha)^{\frac{1-\alpha}{\alpha}} \e^{\cos \left( \frac{\pi \beta}{2\alpha} \right) \Lambda^{\frac{2}{\alpha}} t} \abs{\hat{u}_0(\xi)},\ (\xi,t) \in \supp(\hat{u}_0) \times \R_+,
\eeas
where $C$ was substituted for $2C$ in the penultimate line. Thus, by integrating with respect to $t$ over $(0,T)$, and then using that $(1+s)^{\frac{1}{\alpha}} \le 2^{\frac{1}{\alpha}} (s^{\frac{1}{\alpha}} + 1)$ for all $s \ge 0$, we obtain that
\beas
\norm{\partial_t \hat{u}(\xi,\cdot)}_{L^1(0,T)} & \le & C \left( (1+\abs{\xi}^2 T^\alpha)^{\frac{1}{\alpha}} - 1 \right) \e^{\cos \left( \frac{\pi \beta}{2\alpha} \right) \Lambda^{\frac{2}{\alpha}} T} \abs{\hat{u}_0(\xi)} \\
& \le & C \left( 1 + \abs{\xi}^{\frac{2}{\alpha}} T \right) \e^{\cos \left( \frac{\pi \beta}{2\alpha} \right) \Lambda^{\frac{2}{\alpha}} T}\abs{\hat{u}_0(\xi)},\ 
\xi \in \supp(\hat{u}_0),
\eeas
where $2^{\frac{1}{\alpha}} C$ was replaced by $C$. It follows from this, that
$$ \norm{\partial_t \hat{u}}_{L^1(0,T;L^2(\R^d))} \le C (1+T) \e^{\cos \left( \frac{\pi \beta}{2\alpha} \right) \Lambda^{\frac{2}{\alpha}} T}\norm{\hat{u}_0}_{\Dom(\hat{H_0}^{\frac{1}{\alpha}})},$$
which, together with \eqref{q6} and Lemma \ref{lm2}, yields \eqref{ee2} with $\beta<\alpha$.
\end{proof}

%%%%%%%%%%%%%%%%%%%%%%%%%%%%%%%%%%%%%%%%%%%%%%%%%%%%%%%%%%%%%%%%%%%%%%%%%%%%%%%
\section{Diffusion properties of the TFSE}\label{sec:diffusion1}
\setcounter{equation}{0}

We compute the time-asymptotic behavior of the MSD for solutions of the TFSE with initial conditions in an appropriate subspace of $L^2(\R^d)$. These asymptotics depend on the parameters $\alpha, \beta \in (0,1)$.  

%%%%%%%%%%%%%%%%%%%%%%%%%%%%%%%%%%%%%%%%
\subsection{Mean square displacement}

For $s \in \R_+$, we define weighted $L^2$-spaces
$$ 
L^{2,s}(\R^d) =L^2(\R^d,(1+\abs{x}^2)^{\frac{s}{2}} \dd x):= \{ v \in L^2(\R^d),\ (1+\abs{x}^2)^{\frac{s}{2}} v \in L^2(\R^d) \},\ s>0. 
$$ 
The mean square displacement (MSD) of $\varphi \in \cC(\overline{\R_+},L^{2,2}(\R^d))$ at time $t \in \overline{\R_+}$, is defined by
\bea\label{eq:msd1}
D_2(\varphi,t) & := & \int_{\R^d} \abs{x}^2 \abs{\varphi(x,t)}^2  \dd x \nonumber \\
& = & \langle \abs{\cdot}^2 \varphi(\cdot,t), \varphi(\cdot,t) \rangle, \label{eq3a}
\eea
where the notation $\langle \cdot , \cdot \rangle$ stands for the usual scalar product in $L^2(\R^d)$ or in $L^2(\R^d)^d$ linear in the first entry. 
As $\cF$ is unitary in $L^2(\R^d)$ and $\cF(\abs{\cdot}^2 \varphi)(\xi,t)=-\Delta \hat{\varphi}(\xi,t)$ for a.e. $\xi \in \R^d$ and all $t \in \overline{\R_+}$, we infer from \eqref{eq3a} upon integrating by parts, that
\begin{eqnarray}
D_2(\varphi,t) %& = & \langle \abs{x}^2 \varphi(\cdot,t), \varphi(\cdot,t) \rangle_{L^2(\R^d)} \\
& = & \langle -\Delta \hat{\varphi}(\cdot,t) , \hat{\varphi}(\cdot,t) \rangle \nonumber \\
& = & \| \nabla \hat{\varphi}(\cdot,t) \|^2.\label{eq3b}
\end{eqnarray}
Since the right hand-side on \eqref{eq3b} is well defined whenever $\hat{\varphi} \in  \cC(\overline{\R_+},H^1(\R^d))$, we introduce the MSD at time $t \in \overline{\R_+}$ of $\varphi \in \cC(\overline{\R_+},L^{2,1}(\R^d))$, as
\bel{eq3c}
D_2(\varphi,t) := \norm{\nabla \hat{\varphi}(\cdot,t)}^2.
\ee

%%%%%%%%%%%%%%%%%%%%%%%%%%%%%%%%%%%%%%%%
\subsection{Time-fractional quantum diffusion}

\subsubsection{Settings}
Pick $u_0 \in \cH_{\alpha,\beta}$, where we have set
$$ \cH_{\alpha,\beta} := \left\{ \begin{array}{ll} \
\cS(\R^d) & \mbox{if}\ \beta \ge \alpha \\ \cF^* \cC_0^1(\R^d) & \mbox{if}\ \beta<\alpha, \end{array} \right.$$
$\cC_0^1(\R^d)$ being the space of continuously differentiable compactly supported functions in $\R^d$. 
Since it is clear that $\cH_{\alpha,\beta} \subset \cU_{\alpha,\beta}$, the system \eqref{eq1}-\eqref{eq2} admits a unique solution $u$ according to Proposition \ref{pr1}, whose Fourier transform is expressed by
$$
\hat{u}(\xi,t)=E_{\alpha,1}(\kappa) \hat{u}_0(\xi),\ (\xi,t) \in Q,
$$
where we recall that $\kappa=(-i)^{\beta} \abs{\xi}^2 t^\alpha$.
Next, using that $\frac{\dd}{\dd z} E_{\alpha,1}(z)=\alpha^{-1} E_{\alpha,\alpha}(z)$, $z \in \C$, we deduce from \eqref{p2} that
\bel{eq4}
\nabla \hat{u}(\xi,t)=2 \alpha^{-1}(-i)^{\beta} t^\alpha E_{\alpha,\alpha}(\kappa) \xi \hat{u}_0(\xi) + E_{\alpha,1}(\kappa) \nabla \hat{u}_0(\xi),\ (\xi,t) \in Q.
\ee
Thus, in light of Lemma \ref{lm1}, we see that $\hat{u}(\cdot,t) \in H^1(\R^d)$ for all $t \in \overline{\R^+}$, and hence that the MSD of $u$, denoted by $D_2(u_0,t)$ in the sequel, is well defined by \eqref{eq3c} where $\varphi$ is replaced by $u$: Namely, plugging \eqref{eq4} into \eqref{eq3c}, we obtain that
\bel{eq5}
D_2(u_0,t) = \norm{\nabla \hat{u}}^2, t \in \R_+.
\ee

%%%%%%%%%%%%%%%%%%%%%%%%%%%%%%%%%%%%%%
\subsubsection{Asymptotics}

We shall examine the three cases $\alpha<\beta$, $\alpha=\beta$, and $\alpha>\beta$ separately.\\

\noindent {\it First case: $\alpha < \beta$}.
Taking $\mu \in (\pi \alpha \slash 2, \min (\pi \alpha,\pi \beta \slash 2) )$ in \cite[Theorems 1.4]{Pod}, we have $\mu \le \abs{\arg \kappa} \le \pi$, whence
\bel{eq20a}
E_{\alpha,1}(\kappa) = -\frac{1}{\Gamma(1-\alpha)} \kappa^{-1} + O(\abs{\kappa}^{-2}),\ \abs{\kappa} \to \infty,
\ee
and
\bel{eq20b} 
E_{\alpha,\alpha}(\kappa) = -\frac{1}{\Gamma(-\alpha)} \kappa^{-2} + O(\abs{\kappa}^{-3}),\ \abs{\kappa} \to \infty,
\ee
according to \cite[Eq. (1.143)]{Pod}. Thus, plugging $\kappa=(-i)^\beta \abs{\xi}^2 t^{\alpha}$ into \eqref{eq20a}-\eqref{eq20b} and recalling \eqref{eq4}, we infer from \eqref{eq5} that 
\bel{eq25}
D_2(u_0,t)=C_\alpha(u_0) t^{-2 \alpha} + O(t^{-3 \alpha}),\ t \to \infty,
\ee
where 
\bel{eq26}
C_\alpha(u_0):= \norm{\abs{\xi}^{-2} \left( \frac{\nabla \hat{u}_0}{\Gamma(1-\alpha)}  + \frac{2\abs{\xi}^{-2} \xi \hat{u}_0}{\alpha \Gamma(-\alpha)}  \right)}^2.
\ee

\noindent {\it Second case: $\alpha = \beta$}.
Taking $\mu \in (\pi \alpha \slash 2, \min (\pi, \pi \alpha) )$ in \cite[Theorems 1.3]{Pod}, we have $\abs{\arg \kappa} \le \mu$, whence
\bel{eq8a} 
E_{\alpha,1}(\kappa) = \frac{1}{\alpha} e^{\kappa^{\frac{1}{\alpha}}} - \frac{1}{\Gamma(1-\alpha)} \kappa^{-1} + O(\abs{\kappa}^{-2}),\ \abs{\kappa} \to \infty,
\ee
\bel{eq8b} 
E_{\alpha,\alpha}(\kappa) = \frac{1}{\alpha} \kappa^{\frac{1-\alpha}{\alpha}} e^{\kappa^{\frac{1}{\alpha}}} - \frac{1}{\Gamma(-\alpha)} \kappa^{-2} + O(\abs{\kappa}^{-3}),\ \abs{\kappa} \to \infty,
\ee
by \cite[Eq. (1.135)]{Pod}.

Putting \eqref{eq8a}-\eqref{eq8b} into \eqref{eq4}, and using that $\kappa=(-i)^\alpha \abs{\xi}^2 t^\alpha$, we infer from 
\eqref{eq5} that
\bel{eq12} 
D_2(u_0,t) = \frac{4\norm{\abs{\xi}^{\frac{2-\alpha}{\alpha}} \hat{u}_0}^2}{\alpha^4} t^2 + O(t),\ t \to \infty.
%\sum_{j=0}^2 C_{j,\alpha} t^j + O(t^{-\alpha}),\ t \to \infty,
\ee
%where
%\bel{eq13}
%C_{0,\alpha}(u_0):= \frac{\norm{\nabla \hat{u}_0}^2}{\alpha^2},\
%C_{1,\alpha}(u_0):= \frac{4 \Pim{\langle \abs{\xi}^{\frac{2(1-\alpha)}{\alpha}} \xi \hat{u}_0 , \nabla \hat{u}_0 \rangle}}{\alpha^3},\
%C_{2,\alpha}(u_0):=\frac{4\norm{\abs{\xi}^{\frac{2-\alpha}{\alpha}} \hat{u}_0}^2}{\alpha^4}.
%\ee
%Le terme en $t^{1-\alpha}$ est nul ici, d?apr�s la phase stationnaire. \\

%%%%%%%%%%%%%%%%%%%%%%%%%%%%%%%%%%%%
\noindent{\it Third case: $\alpha>\beta$}. 
Taking $\mu$ as in the {\it Second case}, we still have $\abs{\arg(\kappa)} \le \mu$, whence \eqref{eq8a}-\eqref{eq8b} remain valid.
In light of this, and  \eqref{eq4}, this implies the large $t$-asymptotic expansion 
\bea \label{eq:grad3}
\nabla \hat{u}(\xi,t) & = & \e^{\left( \cos \left( \frac{\pi \beta}{2 \alpha} \right) - i \sin \left( \frac{\pi \beta}{2 \alpha} \right) \right) \abs{\xi}^{\frac{2}{\alpha}} t} \left( \frac{2}{\alpha^2} (-i)^{\frac{\beta}{\alpha}} \abs{\xi}^{\frac{2(1-\alpha)}{\alpha}} t \xi \hat{u}_0(\xi) + \frac{1}{\alpha} \nabla \hat{u}_0(\xi) \right)  \nonumber \\
& & - i^\beta \abs{\xi}^{-2} \left(  \frac{2}{\alpha \Gamma(-\alpha)} \abs{\xi}^{-2} \xi  \hat{u}_0(\xi) +  \frac{1}{\Gamma(1-\alpha)}  \nabla \hat{u}_0(\xi) \right) t^{-\alpha}+ O(t^{-2\alpha}). \nonumber \\
 & &  \nonumber  \\ 
\eea
For any $u_0 \in \cF^* C_0^1(\R^d)$, there exist finite constants $\Lambda_\pm \geq 0$, so that   
\beq\label{eq:supp1}
 \supp(\hat{u}_0) \subset \{ \xi \in \R^d,\ \Lambda_- \le \abs{\xi} \le \Lambda_+ \} . 
\eeq
Substituting \eqref{eq:grad3} into \eqref{eq5}, and using $\Lambda_\pm$ defined in \eqref{eq:supp1},  we get the follow bounds as $\ t \to \infty$:
% for all $u_0 \in \cF^* C_0^1(\R^d)$ fulfilling
%$$ 
%\exists \Lambda_-, \Lambda_+ >0,\ \supp(\hat{u}_0) \subset \{ \xi \in \R^d,\ \Lambda_- \le \abs{\xi} \le \Lambda_+ \}, 
%$$
%that
\beas
& & \frac{4}{\alpha^2} \e^{2 \cos \left( \frac{\pi \beta}{2 \alpha} \right)\Lambda_-^{\frac{2}{\alpha}} t} t \left( \norm{\abs{\xi}^{\frac{2-\alpha}{\alpha}} \hat{u}_0}^2 + O(t^{-1}) \right) \\
& \le & D_2(u_0, t) \le \frac{4}{\alpha^2} \e^{2 \cos \left( \frac{\pi \beta}{2 \alpha} \right)\Lambda_+^{\frac{2}{\alpha}} t} t \left( \norm{\abs{\xi}^{\frac{2-\alpha}{\alpha}} \hat{u}_0}^2 + O(t^{-1}) \right),\ t \to \infty.
\eeas
We note that the upper and lower bounds on the MSD  in case 3 grow exponentially in time.

\bigskip

%%%%%%%%%%%%%%%%%%%%%%%%%%%%%%%%%%%%%%%%


\begin{thebibliography}{99}

\bibitem{ayh} B.\ N.\ Narahari Achar, B.\ T.\ Yale, J.\ W.\ Hanneken, \emph{Time Fractional \Schr Equation Revisited}, Advances in Mathematical Physics,
Volume \textbf{2013}, Article ID 290216, 11 pages.

\bibitem{bayin} S.\ Bayin, \emph{Time fractional Schrödinger equation: Fox's H-functions and the effective potential}, J.\ Math.\ Phys. \textbf{54}, 012103 (2013)

%\bibitem{DeB-P} S.\ De Bi\`evre, J.\ Pul\'e, \emph{Propagating edge states for a %magnetic Hamiltonian},  Math.\ Phys.\ Electron.\ J. \textbf{5}(3):17 pp. (1999).


\bibitem{DX} J.\ Dong, M.\ Xu,  \emph{Space-time fractional Schrödinger equation with time-independent potentials}, J.\ Math.\ Anal.\ Appl.\textbf{344}(2008), 1005-1017. 


%\bibitem{FH} R.\ P.\ Feynman,\ A.\ R.\ Hibbs, \emph{Quantum Mechanics and Path %Integrals}, McGraw-Hill, New York, 1965.



\bibitem{GPT1} P. Górka, H. Prado, J.\  Trujillo,  \emph{The time fractional Schrödinger equation on Hilbert space}, Integr.\ Equations Oper.\ Theory
\textbf{87}(2017), 1-14. 

\bibitem{GPP1}  P.\ G\'orka, H.\ Prado, D.\ J.\ Pons, \emph{The asymptotic behavior of the time fractional Schrödinger equation on Hilbert space}, J.\ Math.\ Phys.
\textbf{61}(2020), no.\ 3, 031501.

%\bibitem{hs1} P.\ D.\ Hislop, E.\ Soccorsi, \emph{Edge currents for the time-fractional, half-plane, \Schr equation with constant magnetic field}, arXiv
%
%
%
%\bibitem{hs2} P.\ D.\ Hislop, E.\ Soccorsi, \emph{Edge currents for quantum Hall systems, I. one-edge, unbounded geometries,}
%Rev.\ Math.\ Phys.,\textbf{ 20}(1):71–115, 2008.

\bibitem{hs1}  P.\ D.\ Hislop, \'E.\ Soccorsi, \emph{Edge currents for the time-fractional, half-plane, \Schr equation with constant magnetic field}, arXiv


\bibitem{iomin2009} {A.\ Iomin}, {\em Fractional-time quantum dynamics}, Phys. Rev. E 2009, 80:022103.

\bibitem{iomin2011}{A.\ Iomin}, {\em Fractional-time Schrödinger equation: fractional dynamics on a comb,}
Chaos, Solitons \& Fractals 2011, 44, 348-52.


\bibitem{iomin2019_A}{A.\ Iomin}, {\em Fractional evolution quantum mechanics},  Chaos, Solitons, \& Fractals: X, \textbf{1}(2019) 100001. 

\bibitem{iomin2019_B}{A.\ Iomin}, {\em Fractional time quantum mechanics}. In: Tarasov V, editor. Handbook of Fractional Calculus with Applications, Applications in Physics, Part B, V 5. Ed.. De Gruyter; 2019.

\bibitem{kst} A.\ A.\ Kilbas, H.\ M.\ Srivastava, J.\ J.\ Trujillo,  \emph{Theory and applications of fractional differential equations}, North-Holland Mathematical Studies \textbf{204}, Elsevier, Amsterdam, 2006. 
 
\bibitem{laskin2000_1} N.\ Laskin,  \emph{Fractional quantum mechanics}, Physical Review E,
vol. \textbf{62}, no. 3, pp. 3135–3145, 2000.

\bibitem{laskin2000_2}  N.\  Laskin, \emph{Fractional quantum mechanics and L\'evy path integrals}, Physics Letters A, vol. \textbf{268}, no. 4–6, pp. 298–305, 2000.


\bibitem{laskin2002} N.\ Laskin, \emph{Fractional \Schr equation}, Physical Review
E, \textbf{66}, no.\ 5, Article ID 056108, 7 pages, 2002.


\bibitem{naber} N.\ Naber,  \emph{Time fractional \Schr equation},  Journal of
Mathematical Physics, vol.\ \textbf{45}, no.\ 8, pp. 3339–3352, 2004.

\bibitem{Pod} I.\ Podlubny, {\em An introduction to fractional derivatives, fractional differential equations}, Academic Press, 1998.

\bibitem{WX} S.\ Wang, M.\ Xu, \emph{Generalized fractional Schrödinger equation with space–time fractional derivatives,} J.\ Math.\ Phys. \textbf{48}, 043502 (2007). 




\end{thebibliography}
\end{document}